\documentclass{amsart}
\usepackage{amsfonts,amssymb,amsmath,amsthm}
\usepackage{url}
\usepackage{enumerate}

\urlstyle{sf}
\newtheorem{thrm}{Theorem}[section]
\newtheorem{lem}[thrm]{Lemma}
\newtheorem{prop}[thrm]{Proposition}

\theoremstyle{definition}
\newtheorem{definition}[thrm]{Definition}
\newtheorem{discussion}[thrm]{Discussion}
\newtheorem{question}[thrm]{Question}
\newtheorem{remark}[thrm]{Remark}
\newtheorem{example}[thrm]{Example}
\numberwithin{equation}{section}

\newcommand{\Ass}{\operatorname{Ass}}

\newcommand{\fgrade}{\operatorname{fgrade}}

\newcommand{\Spec}{\operatorname{Spec}}

\newcommand{\rad}{\operatorname{rad}}
\newcommand{\cd}{\operatorname{cd}}

\newcommand{\Ht}{\operatorname{ht}}
\newcommand{\pd}{\operatorname{pd}}

\newcommand{\Ext}{\operatorname{Ext}}

\newcommand{\Hom}{\operatorname{Hom}}

\newcommand{\Ann}{\operatorname{Ann}}
\newcommand{\rank}{\operatorname{rank}}

\newcommand{\depth}{\operatorname{depth}}

\newcommand{\vpl}{\operatornamewithlimits{\varprojlim}}

\newcommand{\fm}{\frak{m}}
\newcommand{\fp}{\frak{p}}

\newcommand{\fn}{\frak{n}}

\author{ Kh. Ahmadi-Amoli\ \ }

\author{ E. Banisaeed\ \ }
\author{ M. Eghbali\ \ }
\author{ F. Rahmati}

\address{Department of Mathematics , Payame noor University, Tehran, 19395--3697, Iran.}
 \email{khahmadi@pnu.ac.ir}

 \address{Department of Mathematics , Payame noor University, Tehran, 19395--3697, Iran.}
 \email{banisaeed.elahe@yahoo.com}

\address{School of Mathematics, Institute for Research in Fundamental Sciences (IPM), P. O. Box: 19395-5746,
Tehran-Iran.} \email{m.eghbali@yahoo.com}

\address{Department of Pure Mathematics , Faculty of Mathematics and Computer Science, Amirkabir University of technology (Tehran Polytechnic), 424 Hafez Ave., 15914, Tehran, Iran.}
 \email{frahmati@aut.ac.ir}
\thanks{The third named author's research was in part supported by a grant from IPM (No. 94130017) }
\keywords{Local cohomology, Formal grade, Depth, Lyubeznik numbers.}

\subjclass[2000]{13D45, 14C14.}

\begin{document}

\title[formal grade, depth and Lubeznik numbers]{ On the relation between formal grade and depth with a view toward vanishing of Lyubeznik numbers}

\begin{abstract}
Let $(R,\fm)$ be a local ring and $I$ an ideal. The aim of the present paper is twofold.
At first we continue the
investigation to compare $\fgrade(I,R)$ with
$\depth R/I$ and further we derive some results
on the vanishing of Lyubeznik numbers.
\end{abstract}
 \maketitle

\section{Introduction} \label{sect1}

Let $(R,\fm)$ be a commutative Noetherian local ring and $I$ be an
ideal of $R$. For an $R$-module $M$, we denote by $H^i_I(M)$ the
$i$th local cohomology module of $M$ with respect to $I$. For more
details the reader may consult \cite{Br-Sh}.

Consider the family of local cohomology modules $\{H^i_{\fm}
(M/I^nM)\}_{n \in \mathbb{N}}$. For every $n$ there is a natural
homomorphism $H^i_{\fm} (M/I^{n+1}M) \rightarrow H^i_{\fm} (M/I^nM)$
such that the family forms a projective system. The projective limit
${\vpl}_nH^i_{\fm}(M/I^n M)$ is called the $i$th formal local
cohomology of $M$ with respect to $I$ (cf. \cite{Sch}). For more information
 on this topic we refer the reader to \cite{Sch}, \cite{As-D} and \cite{E13}.
In the case $(R,\fm)$ is a Gorenstein local ring Peskine and Szpioro \cite{Pe-Sz} proved the isomorphism
$$H^i_{p}(\mathcal{X}, \mathcal{O}_{\mathcal{X}}) \cong {\vpl}_nH^i_{\fm}(R/I^n),$$
where $\mathcal{X}$ is the formal completion of $\Spec R$ along $\Spec R/I$ and $p$ a closed point.

 The formal
grade, $\fgrade(I,M)$, is defined as the index of the minimal
nonvanishing formal cohomology module, i.e., $\fgrade(I,M) = \inf
\{i \in \mathbb{Z}|\ \ {\vpl}_nH^i_{\fm}(M/I^n M) \neq 0\}$.

The concept of formal grade
 can be used to
investigate vanishing conditions of local cohomology modules. One
way to check out vanishing of local cohomology modules is the
following duality
\begin{equation}\label{duality}
    {\vpl}_nH^i_{\fm}(R/I^n) \cong \Hom_R(H^{\dim R-i}_{I}(R),E(R/\fm)),
\end{equation}
where $(R,\fm)$ is a Gorenstein local ring and $E(R/\fm)$ denotes
the injective hull of the residue field (cf. \cite[Remark
3.6]{Sch}).

 In the case that $(R,\fm)$ is a regular local ring containing a field $k$ the following consideration of formal grade
have been given:
\begin{itemize}
  \item[(1)] If $k=\mathbb{Q}$, then $\fgrade(I,R) \leq$ de Rham depth of $\Spec R/I$, (cf. \cite[Theorem 2.8]{Og}).
  \item[(2)] If $k$ is of positive characteristic and $R$ is $F$-finite, then  $\fgrade(I,R) =$ Frobenius depth of $R/I$, (cf. \cite[Corollary 3.9]{E15}).
\end{itemize}

From the other viewpoint, the computation of $\depth(R/I^n)$, even for small values of $n$,
can be very difficult. So, determining of $\min_n \depth(R/I^n)$ in
most of the cases is abortive. A natural upper bound for the $\min_n
\depth(R/I^n)$ is the $\depth R/I$. In an attempt to find a suitable
(homological) upper bound for the $\min_n \depth(R/I^n)$, the third
named author in \cite{E}, examined the formal grade of $I$,
$\fgrade(I,R)$ and gave concrete examples for the equality $\min_n
\depth(R/I^n) = \fgrade(I,R)$.
If $(R,\fm)$ is a regular local ring, in the light of
equality (\ref{duality}) and \cite[Lemma 4.8]{Sch} one has
$$\fgrade(I,R)=\depth R/I \ \ \ \text{\ if\ and\ only\ if\ }\ \ \ \pd R/I=\cd(R,I).$$

We let $\pd(-)$ resp. $\cd(-,-)$ to be the projective dimension resp.
cohomological dimension (cf. Section 2).

The relation between $\fgrade(I,R)$ and $\depth R/I$ is not clear in
general. It is known that $\fgrade(I,R) \leq \dim R/I$, (cf.
\cite[Lemma 4.8]{Sch}). The equality $\fgrade(I,R) = \depth R/I$ is
achieved in the following cases:
\begin{itemize}
  \item $R$ is a regular local ring of positive characteristic and
  $R/I$ is Cohen-Macaulay, (cf. \cite[Remark 3.1]{E}).
  \item $R=k[x_1, \ldots, x_n]_{(x_1, \ldots, x_n)}$ and $I$ is a
  squarefree monomial ideal, (cf. \cite[Corollary 4.2]{E}).
\end{itemize}

The equality $\fgrade(I,R) = \depth R/I$ is not the case for non
Cohen-Macaulay ideals in general. Let $I = (x_0x_3-x_1x_2,
x^3_1-x^2_0x_3, x^3_2-x_1x^2_3) \subset R=k[x_0, x_1,x_2,x_3]$ be
the defining ideal of the twisted quartic curve in ${\mathbb P}^3$.
Using Mayer-Vietoris sequence one can see that $\depth R/I =1$ and
$\fgrade(I,R)=2$.

We conclude this introduction by giving a brief preview of what is to come.
Our main contribution in Section 2, is to further investigate on the equality
between $\fgrade(I,R)$ and $\depth R/I$. More precisely, we prove the following.

\begin{thrm}  (cf. Theorem \ref{Main}) Suppose that $I$ is a equimultiple ideal of a regular
local ring $(R,\fm)$ containing a field of characteristic zero. Then,
$$\fgrade(I,R) = \depth R/I^n = \depth R/I^{(n)},\ \text{\ for\ all\ } n > 0,$$
where $I^{(n)}$ denote the $n$th symbolic power of $I$.
\end{thrm}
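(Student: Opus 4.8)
The plan is to collapse all three equalities to a single cohomological statement by means of the Gorenstein duality (\ref{duality}), and then to read off the value of $\fgrade(I,R)$ directly from the equimultiplicity of $I$.

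First I would record that, since $R$ is regular and hence Gorenstein, the duality (\ref{duality}) applied to an arbitrary ideal $J$ yields $\fgrade(J,R)=\dim R-\cd(R,J)$, where $\cd(R,J)=\sup\{i:H^i_J(R)\neq 0\}$. Because $H^i_J(R)$ depends only on $\sqrt J$, and $\sqrt{I^n}=\sqrt{I^{(n)}}=\sqrt I$, this already forces $\fgrade(I^n,R)=\fgrade(I^{(n)},R)=\fgrade(I,R)$ for all $n>0$. Consequently it suffices to prove, for each fixed $n$, that $\depth R/I^n=\fgrade(I,R)$ and $\depth R/I^{(n)}=\fgrade(I,R)$; the constancy in $n$ is then a formal consequence rather than something to be established separately.

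Next I would compute $\fgrade(I,R)$ from the hypothesis $\ell(I)=\Ht(I)=h$. After replacing $R$ by $R(t)$, so that the residue field becomes infinite without altering any of the invariants involved, $I$ has a minimal reduction $J=(a_1,\dots,a_h)$, and every reduction satisfies $\sqrt J=\sqrt I$. Hence $\cd(R,I)=\cd(R,J)\le h$, the inequality because $H^i_J(R)$ is computed by the \v{C}ech complex on the $h$ elements $a_1,\dots,a_h$. On the other hand $\cd(R,I)\ge\grade(I,R)=\Ht(I)=h$, since $R$ is Cohen--Macaulay. Therefore $\cd(R,I)=h$ and $\fgrade(I,R)=\dim R-h=\dim R/I$. (This step is characteristic free.)

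Finally, Auslander--Buchsbaum gives $\depth R/J=\dim R-\pd R/J$ for every $J$, so the equality $\depth R/J=\fgrade(J,R)$ is equivalent to $\pd R/J=\cd(R,J)$. Taking $J=I^n$ and $J=I^{(n)}$ and inserting $\cd=h$ from the previous step, the whole theorem reduces to the single assertion that $R/I^n$ and $R/I^{(n)}$ are Cohen--Macaulay of codimension $h$ for every $n$. Here the equimultiplicity is used once more: localizing at a minimal prime of $I$ shows that $I$ is generically a complete intersection, which controls $\Ass(R/I^n)$, and together with the structure theory of equimultiple ideals (Cohen--Macaulayness of $\mathrm{gr}_I(R)$, forcing $R/I^n$ Cohen--Macaulay for all $n$, and the analogue for the symbolic powers) this should yield the claim. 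I expect this last step---the Cohen--Macaulayness of all ordinary and symbolic powers, especially the symbolic side---to be the main obstacle, and it is precisely here that the characteristic-zero hypothesis ought to enter, through Ogus's bound $\fgrade(I,R)\le$ de Rham depth of $\Spec R/I$ from \cite{Og}: combined with $\fgrade(I,R)=\dim R/I$ it forces the de Rham depth to be maximal, the geometric incarnation of the Cohen--Macaulayness required on the symbolic powers.
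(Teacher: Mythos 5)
Your opening reductions are sound and, in part, a genuine (characteristic-free) alternative to the paper's route: computing $\cd(R,I)=\Ht(I)$ by squeezing it between $\grade(I,R)$ and the number of generators of a minimal reduction, and hence $\fgrade(I,R)=\dim R-\Ht(I)=\dim R/I$ via the duality (\ref{duality}), is correct and cleaner than what the paper does at that point. (The paper instead obtains this as a consequence of a much stronger fact; see below. Note also that you never reduce to the case where $I$ is radical, which you implicitly need both for the symbolic powers as defined here and for your later claim that $I$ is generically a complete intersection.) After this, you have correctly reduced the theorem to the assertion that $R/I^n$ and $R/I^{(n)}$ are Cohen--Macaulay for all $n$ --- but that assertion is the entire content of the theorem, and your proposal does not prove it. There is no ``structure theory of equimultiple ideals'' giving Cohen--Macaulayness of $\mathrm{gr}_I(R)$: equimultiplicity alone does not even force $R/I$ to be Cohen--Macaulay in general, let alone all powers. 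And Ogus's inequality $\fgrade(I,R)\leq$ de Rham depth of $\Spec R/I$ goes in the wrong direction for your purposes: knowing $\fgrade(I,R)=\dim R/I$ merely forces the de Rham depth to be maximal, and there is no implication from maximal de Rham depth back to Cohen--Macaulayness of the (symbolic) powers. So the final step is speculation, not proof, and it is precisely the hard step.

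The missing idea is the Cowsik--Nori theorem \cite{C-N}. After reducing to $I$ radical and passing to the completion (so that, because $R$ contains a field of characteristic zero, the residue field is infinite), $I$ is a radical equimultiple ideal of a regular local ring which is generically a complete intersection; Cowsik--Nori then says $I$ is an honest complete intersection. From this everything follows at once: $R/I^n$ is Cohen--Macaulay for all $n$ (e.g.\ because $\mathrm{gr}_I(R)$ is a polynomial ring over $R/I$), and $R/I^{(n)}$ is Cohen--Macaulay by \cite[Corollary 3.8]{Li-Sw}, whence $H^i_{\fm}(R/I^n)=0$ for $i<\dim R/I$ and $\fgrade(I,R)\geq\dim R/I$, closing the loop with (2-1)/(2-2). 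This is also where the characteristic-zero hypothesis actually enters in the paper: not through de Rham theory, but only to guarantee an infinite residue field after completion so that minimal reductions and the Cowsik--Nori argument are available. Without invoking Cowsik--Nori (or an equivalent), your proposal does not establish the theorem.
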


As a consequence of the relationship between $\fgrade(I,R)$ and
$\depth R/I$, in Section 3, we derive some results on the vanishing
of Lyubeznik numbers (cf. Section $3$ for the definitions).
Lyubeznik numbers carry some geometrical and topological information
as indicated in \cite{Lyu2}. For more applications of these
invariants we refer the reader to \cite{GL-S}, \cite{Bl-Bo},
\cite{Lyu3}, \cite{Bl}, \cite{Zh} and \cite{Al-Ya}.

In \cite{V} Varbaro raised the following question:

\begin{question} \label{Question} Suppose that $R$ is a regular local ring containing a field and $I$ an
ideal. For all $j < \depth R/I$ and $i \geq j -2$ is it true that
the Lyubeznik numbers $\lambda_{i,j}(R/I) = 0?$
\end{question}

We give some partial positive answers to the aforementioned question as follows.

\begin{thrm} (cf. Theorem \ref{flatendomorphism} and Proposition \ref{last}) Let $(R,\fm)$ be a local ring  and let $I$ be an ideal of $R$. Assume that one of the following statements holds.
\begin{itemize}
  \item[(1)] Suppose that projective dimension of $R/I$ is finite and $\varphi: R \rightarrow R$ is a ring endomomorphism satisfying going down property such that $\{\varphi^t(I)R\}_{t \geq 0}$ is a decreasing chain of ideals cofinal with
$\{I^t\}_{t \geq 0}$ and  $\depth R/I = r$.
  \item[(2)] Let $R$ be a regular complete ring containing a
field and
let $I$ be a Cohen-Macaulay ideal of $R$ which is generically complete intersection.
Further assume that $\Ht(I) = l(I) - 1$ and $J$ is a proper minimal reduction of $I$
with $\depth R/J=r$.
\end{itemize}
  Then we have
$$\lambda_{i-t,i}(R/I) = 0\ \text{\ for\ all\ } 0 \leq i <r \text{\ and\ } 0 \leq t \leq i.$$
\end{thrm}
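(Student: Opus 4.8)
The plan is to reduce the whole statement to a single cohomological-dimension bound. Recall that, for a regular local ring $S$ of dimension $n$ containing a field with $R/I \cong S/\fa$, the Lyubeznik number is the Bass number $\lambda_{p,q}(R/I) = \dim_k \Ext^p_S(k, H^{n-q}_\fa(S))$, so $\lambda_{p,q}(R/I) = 0$ for every $p$ once $H^{n-q}_\fa(S) = 0$. Since the indices in the conclusion are $\lambda_{i-t,i}$ with $0 \le t \le i$, it suffices to prove $H^{n-i}_\fa(S) = 0$ for all $0 \le i < r$, i.e. $\cd(S,\fa) \le n - r$. Working after completion in such a regular presentation (so that the duality (\ref{duality}) is available) and abbreviating $S,\fa$ again by $R,I$, the identity $\vpl_n H^i_\fm(R/I^n) \cong \Hom_R(H^{n-i}_I(R), E(R/\fm))$ together with Matlis duality gives $\vpl_n H^i_\fm(R/I^n) \ne 0 \iff H^{n-i}_I(R) \ne 0$, whence $\fgrade(I,R) = n - \cd(R,I)$. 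Thus the target becomes the single inequality $\cd(R,I) \le n - r$, equivalently $\fgrade(I,R) \ge r$; each hypothesis produces this by a different mechanism.

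Under hypothesis (1) I would use the flat, going-down endomorphism $\varphi$ as a surrogate for the Frobenius. Because $\{\varphi^t(I)R\}_{t\ge 0}$ is cofinal with $\{I^t\}_{t\ge 0}$, this system computes $H^\bullet_I(R)$; flatness of $\varphi$ preserves the acyclicity of the finite free resolution of $R/I$ (here $\pd_R R/I < \infty$ is essential), so that $\pd_R R/\varphi^t(I)R = \pd_R R/I$ for all $t$. Running the Peskine--Szpiro-type computation along this cofinal system then forces $H^i_I(R) = 0$ for $i > \pd_R R/I$, i.e. $\cd(R,I) \le \pd_R R/I$. By the Auslander--Buchsbaum formula in the regular presentation (where $\depth = n$), $\pd_R R/I = n - \depth R/I = n - r$, and hence $\cd(R,I) \le n - r$ as required. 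Supplying this acyclicity transfer for a general going-down flat endomorphism --- rather than the literal Frobenius and without assuming positive characteristic --- is the crux of case (1) and is what Theorem \ref{flatendomorphism} should provide.

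Under hypothesis (2) I would first pass to the minimal reduction $J$. Since $\rad J = \rad I$, local cohomology is unchanged, so $\cd(R,I) = \cd(R,J)$. The hypothesis $\Ht I = l(I) - 1$ means the minimal reduction $J$ is generated by $l(I) = \Ht I + 1$ elements, i.e. $J$ is an almost complete intersection; this, together with $R/I$ being Cohen--Macaulay and generically a complete intersection, is what pins down the cohomological dimension. Concretely, I expect $\cd(R,I) = \Ht I$ to come out of the Cohen--Macaulay and generic complete-intersection hypotheses, while $r = \depth R/J \le \dim R/J = \dim R/I = n - \Ht I$ gives $\Ht I \le n - r$; combining the two yields $\cd(R,I) \le n - r$. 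Extracting the exact value $\cd(R,I) = \Ht I$ from the almost-complete-intersection structure of $J$ is the delicate point here, to be carried out in Proposition \ref{last}, and the conditions $\Ht I = l(I) - 1$ and the generic complete-intersection property are exactly what the argument consumes.

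In either case we arrive at $\cd(R,I) \le n - r$. Hence for every $0 \le i < r$ we have $n - i > n - r \ge \cd(R,I)$, so $H^{n-i}_I(R) = 0$, and therefore $\lambda_{i-t,i}(R/I) = \dim_k \Ext^{i-t}_R(k, H^{n-i}_I(R)) = 0$ for all $0 \le t \le i$; indeed every $\lambda_{p,i}(R/I)$ with $i < r$ vanishes, not merely those with $p = i-t$. The routine parts are the reduction to local cohomology and the duality bookkeeping; the genuine obstacles are the two cohomological-dimension estimates of the middle paragraphs, namely the Frobenius-style acyclicity transfer in case (1) and the determination of $\cd(R,I)$ from the reduction $J$ in case (2).
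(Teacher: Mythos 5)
Your reduction of the whole theorem to the single inequality $\fgrade(I,R)\geq r$ (equivalently $\cd(R,I)\leq n-r$ in a regular presentation, via the duality (\ref{duality}) and Matlis duality) is exactly the paper's Lemma \ref{Lyunumb}, and that part is fine. But both of the substantive steps are left as acknowledged gaps, and in each case the route you sketch diverges from what the hypotheses actually support. In case (1) you repeatedly call $\varphi$ \emph{flat}, whereas the hypothesis is only the going-down property; the entire content of the paper's Lemma \ref{Matteo} is to get the acyclicity transfer from going-down alone, via the Buchsbaum--Eisenbud criterion: going-down guarantees that the ideals of minors $I_{r_i}(\varphi(A_i))$ of the pushed-forward resolution still have grade $\geq i$, whence $\pd R/\varphi^t(I)R\leq \pd R/I$ and, by Auslander--Buchsbaum, $\depth R/\varphi^t(I)R\geq r$ for all $t$. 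The paper then concludes on the dual side, taking $\vpl_t H^i_{\fm}(R/\varphi^t(I)R)\cong \vpl_t H^i_{\fm}(R/I^t)=0$ for $i<r$ over the cofinal system, rather than running a Peskine--Szpiro computation of $H^i_I(R)$; your version is in the right spirit but the step you defer is precisely the theorem.

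Case (2) contains a more serious misdirection. You propose to extract $\cd(R,I)=\Ht(I)$ from the Cohen--Macaulay and generic complete intersection hypotheses. That equality is \emph{not} a consequence of the hypotheses in characteristic zero (which is the only case at issue --- in positive characteristic the result is already known); it would make $I$ a cohomologically complete intersection, which is much stronger than anything the setup provides, and the paper never claims it. What the paper actually uses is the pair of facts $\cd(R,I)=\cd(R,J)\leq\mu(J)=l(I)$ (a minimal reduction is generated by $l(I)$ elements) together with the \emph{exact} equality $\depth R/J=\dim R-\Ht(J)-1=\dim R-l(I)$ from \cite[Theorem 5.3.4]{Vas}; this is where the hypotheses ``$I$ Cohen--Macaulay, generically complete intersection, $\Ht(I)=l(I)-1$, $J$ a proper minimal reduction'' are consumed. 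Because you only invoke the weaker inequality $r\leq\dim R/I$, your arithmetic forces you to need $\cd(R,I)\leq\Ht(I)$, i.e.\ the unprovable equality; with the sharper value $r=\dim R-l(I)$ the bound $\cd(R,I)\leq l(I)$ already gives $\cd(R,I)\leq n-r$ and the argument closes. So the gap here is not merely an omitted verification: the proposed mechanism would have to be replaced, not filled in.
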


It is noteworthy to mention that in the case $R$ is essentially of
finite type the Question \ref{Question} has been proved in
\cite{Da-Ta}.

\section{Formal grade and depth}

Let $(R,\fm)$ be a commutative Noetherian local ring and $I$ be an
ideal of $R$. In this section we will further investigate on the
relationship between $\depth R/I$ and $\fgrade(I,R)$. First of all
we review some notions and results of formal grade and related
topics.

\begin{itemize}
  \item [(2-1)] $\dim M/IM = \sup \{i \in \mathbb{Z}|\ \ \  {\vpl}_nH^i_{\fm}(M/I^n M)
\neq 0\}$, (cf. \cite[Theorem 4.5]{Sch}).
  \item [(2-2)] $ \fgrade(I,M) \leq \min \{ \depth_R M,\dim M/IM\}$, (cf. \cite[Lemma 4.8]{Sch}).
 \item [(2-3)] Suppose that $R$ is a Cohen-Macaulay local ring. Then $\fgrade(I,R)+ \cd(R,I) = \dim
 R$, (cf. \cite{As-D}), where $\cd(R,I) =\sup \{i \in \mathbb{Z} \mid \ H^i_I(R) \neq 0 \}$.
  \item [(2-4)] $\min_n \depth R/I^n \leq \fgrade(I,R)$, (cf. \cite[Proposition 2.2]{E}).
\end{itemize}

An ideal $I$ is called cohomologically complete intersection if
$\Ht(I)=\cd(R,I)$, (cf. \cite{H-Sch}). Suppose the residue field
$R/\fm$ is infinite. We denote by $l(I)$ the Krull dimension of
$\oplus^{\infty}_{n=0}(I^n/I^n \fm)$, called the analytic spread of
$I$. It is known that $\Ht(I) \leq l(I) \leq \mu(I)$, where $\mu(I)$
is the minimal number of generators of $I$. The ideal $I$ is called
equimultiple if $\Ht(I) = l(I)$. There are large classes of such
ideals, for instance, every $\fm$-primary ideal $I$ is equimultiple
 (cf. \cite[Theorem 1, page 154]{No-Re}).

\begin{remark}  Let $(R,\fm)$ be a Cohen-Macaulay local ring with infinite residue field and
 $I$ be a non Cohen-Macaulay cohomologically complete intersection ideal.
 Then
 \begin{enumerate}
   \item [(a)]  $\depth R/I \leq \fgrade(I,R)-1$.
   \item [(b)] If $l(I)-\Ht(I)=1$, $\Ht(I) \geq 1$ and the graded associated
   ring $gr_I(R)$ is Cohen-Macaulay, then $\depth R/I \geq \fgrade(I,R)-1$.
 \end{enumerate}
\end{remark}

\begin{proof} (a) As $I$ is a cohomologically complete intersection ideal
which is not Cohen-Macaulay, $\depth R/I \leq \dim R- \cd(I,R)-1$.
The later is nothing but  $\fgrade(I,R)-1$, by the equality (2-3)
above.

(b) Since $gr_I(R)$ is Cohen-Macaulay, by \cite[Proposition
3.3]{Ei-Hu} we have $l(I)=\dim R- \min \depth R/I^n$. Now the
assertion follows by the following inequality
$$1 \geq \dim R- \depth R/I-\cd(R,I) =
\fgrade(I,R)-\depth R/I.$$
\end{proof}

An ideal  $J \subset I$ is a reduction of
 $I$ if $JI^r = I^{r+1}$ for some integer $r$. The least
such integer is the reduction number of $I$ with respect to $J$.

\begin{remark}  Let $(R,\fm)$ be a Cohen-Macaulay local ring with infinite residue field. Assume that
 $I$ is a Cohen-Macaulay  ideal with $l(I)-\Ht(I)=1$ and $\Ht(I) \geq 1$. Further assume that the reduction number of $I$ is $1$.
 Then
$$\depth R/I \geq \fgrade(I,R) \geq \depth R/I-1.$$
\end{remark}

\begin{proof}
By \cite[Corollary 5.3.7]{Vas},  $gr_I(R)$ is Cohen-Macaulay. Once again using
\cite[Proposition 3.3]{Ei-Hu}, the claim is clear.
\end{proof}

As hinted at in the introduction, for a regular local ring
$(R,\fm)$  containing a field of positive characteristic,
$\fgrade(I,R)=\depth R/I$, whenever $I$ is a Cohen-Macaualy ideal
(\cite[Remark 3.1]{E}). The following result (Theorem \ref{Main}) can be thought of as
its analogue in characteristic zero.

\begin{definition} Let $R$ be a reduced ring and $I$ be an ideal. If $\fp_1,\ldots, \fp_n$ are minimal
primes of $I$, we set $S := R \setminus ( \fp_1 \cup \ldots \cup
\fp_n)$ a multiplicatively closed subset of $R$. The $n$th symbolic
power of $I$ with respect to $S$ is defined as $I^{(n)} =
I^n(S^{-1}R) \cap R$.
\end{definition}

\begin{thrm} \label{Main} Suppose that $I$ is a equimultiple
 ideal of a regular local ring $(R,\fm)$ containing a field
of characteristic zero. Then,
$$\fgrade(I,R)=\depth R/I^n=\depth R/I^{(n)},\ \text{\ for\ all\ }
n>0.$$
\end{thrm}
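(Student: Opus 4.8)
The plan is to separate the two genuinely different assertions: the computation of $\fgrade(I,R)$, which is formal, and the Cohen--Macaulayness of the quotients $R/I^n$, which is where the characteristic-zero hypothesis must really be used. First I would compute the formal grade. Assuming $R/\fm$ infinite (otherwise pass to $R[t]_{\fm R[t]}$), the equimultiple ideal $I$ has a minimal reduction $J\subseteq I$ generated by $l(I)=\Ht(I)=:g$ elements. Since $R$ is regular, hence Cohen--Macaulay, and $\Ht(J)=\Ht(I)=g$, these $g$ generators form a regular sequence, so $J$ is a complete intersection with $\sqrt{J}=\sqrt{I}$. Consequently $\cd(R,I)=\cd(R,J)=g$, i.e. $I$ is a cohomologically complete intersection, and (2-3) yields $\fgrade(I,R)=\dim R-\cd(R,I)=\dim R-g=\dim R/I$. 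Because $\sqrt{I^n}=\sqrt{I^{(n)}}=\sqrt{I}$, the cohomological dimension, and hence the formal grade, is unchanged when $I$ is replaced by $I^n$ or $I^{(n)}$.

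Next I would reduce the depth equalities to a single Cohen--Macaulayness statement. By Auslander--Buchsbaum $\depth R/I^n=\dim R-\pd R/I^n$, and since $\depth R/I^n\le\dim R/I^n=\dim R/I$, the desired equality $\depth R/I^n=\fgrade(I,R)=\dim R/I$ is equivalent to $R/I^n$ being Cohen--Macaulay; the inequality (2-4) is then automatically consistent. The key observation is that $I^n$ is again equimultiple, since $l(I^n)=l(I)=g=\Ht(I^n)$. Hence it suffices to establish the core fact that \emph{an equimultiple ideal of a regular local ring containing a field of characteristic zero has a Cohen--Macaulay quotient}, and then to apply it to each $I^n$ in place of $I$.

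The core fact is the main obstacle, and it is exactly here that characteristic zero is indispensable: the twisted-quartic example shows that a cohomologically complete intersection need not be Cohen--Macaulay, so the bare equality $\fgrade(I,R)=\dim R/I$ cannot suffice, and one must exploit the full force of equimultiplicity. The structure I would use is that $J$ is a complete intersection, so $R/J$ is Gorenstein of dimension $\dim R/I$, while $\sqrt{J}=\sqrt{I}$ forces $I/J$ to be a nilpotent ideal of $R/J$. To upgrade the Gorensteinness of $R/J$ to the Cohen--Macaulayness of the nilpotent quotient $R/I$ I would invoke the characteristic-zero de Rham depth theory of Ogus: from $\fgrade(I,R)=\dim R/I$, together with Ogus's bound $\fgrade(I,R)\le d$ and the general estimate $d\le\dim R/I$, where $d$ denotes the de Rham depth of $\Spec R/I$, one concludes $d=\dim R/I$ is maximal, and I would combine this with the integral dependence $I\subseteq\overline{J}$ and the concentration $H^i_I(R)=0$ for $i\ne g$ to force $H^i_{\fm}(R/I)=0$ for $i<\dim R/I$. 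Making this last implication precise, that is, passing from maximal de Rham depth plus the complete-intersection reduction to the vanishing of the sub-top local cohomology of $R/I$, is the delicate point; in positive characteristic this step is unavailable, which is why the analogue there must assume $R/I$ Cohen--Macaulay outright.

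Finally, the symbolic powers require no extra work. Once $R/I^n$ is known to be Cohen--Macaulay it is unmixed, so $\Ass(R/I^n)$ consists only of the minimal primes of $I$ and $I^n$ has no embedded component; hence $I^n=I^{(n)}$ and $\depth R/I^{(n)}=\depth R/I^n=\dim R/I=\fgrade(I,R)$, which closes all three equalities for every $n>0$.
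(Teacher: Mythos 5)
Your opening moves are sound, and the computation of the formal grade is arguably cleaner than the paper's: a minimal reduction $J\subseteq I$ generated by $g=l(I)=\Ht(I)$ elements is a complete intersection with $\rad(J)=\rad(I)$, so $\cd(R,I)=\cd(R,J)=g$ and (2-3) gives $\fgrade(I,R)=\dim R/I$ with no input about depth. The reduction of the remaining claims to the Cohen--Macaulayness of $R/I^n$, and the deduction $I^{(n)}=I^n$ from unmixedness, are also correct.

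The proof, however, lives entirely in your ``core fact,'' and there it has a genuine gap that you yourself flag. The passage from $\fgrade(I,R)=\dim R/I$ (plus the complete-intersection reduction $J$) to $H^i_{\fm}(R/I)=0$ for $i<\dim R/I$ is not established, and the route through Ogus's de Rham depth cannot establish it: Ogus's theory, like the concentration of $H^i_I(R)$ in degree $g$, only sees $I$ up to radical, whereas $\depth R/I$ does not, and the discrepancy between $\fgrade(I,R)$ and $\depth R/I$ is precisely the phenomenon the whole paper is about. No argument that transfers cohomological information from $J$ to $I$ through $\rad(J)=\rad(I)$ can close this step. The ingredient you are missing is the theorem of Cowsik and Nori, and you already hold the relevant object in your hand: after reducing to the case where $I$ is radical and $R$ is complete, one shows that the complete-intersection reduction $J$ is actually \emph{equal} to $I$ --- $J$ is unmixed of height $g$ with $\rad(J)=I$, and $JR_{\fp}=IR_{\fp}=\fp R_{\fp}$ for each minimal prime $\fp$ because $JR_{\fp}$ is a parameter ideal of multiplicity $e(\fp R_{\fp})=1$ in the regular local ring $R_{\fp}$. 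Thus $I$ itself is generated by a regular sequence, whence $R/I^n$ is Cohen--Macaulay for every $n$ (the associated graded ring is a polynomial ring over $R/I$) and $R/I^{(n)}$ is Cohen--Macaulay by Li--Swanson; this is exactly how the paper concludes, via (2-1). Note also that your plan, as written, needs the core fact for the \emph{non-radical} ideals $I^n$, which is strictly harder than what Cowsik--Nori provides; the paper avoids this by upgrading $I$ itself to a complete intersection first and only then handling the powers. Without that structural upgrade from ``cohomologically a complete intersection'' to ``a complete intersection,'' your argument does not reach the vanishing of $H^i_{\fm}(R/I^n)$.
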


\begin{proof} Since $l(\rad(I))=l(I)$,  $\fgrade(\rad(I),R)=\fgrade(I,R)$,
$\depth R/(\rad(I))^n=\depth R/I^n$ and $\depth
R/(\rad(I))^{(n)}=\depth R/I^{(n)}$, then we may assume that $I$ is
a radical ideal. On the other hand, it is known that $l(I)=l(I
\hat{R})$ and since $R$ is regular, for all integer $n$ one has
$\hat{I}^{(n)} = I^{(n)}\hat{R}$ (see for instance \cite[page
14]{Hun-Hoc}) where $\hat{R}$ is the completion of $R$ with respect
to the maximal ideal $\fm$. Moreover, depth and formal grade are
stable under completion, (cf. \cite[Proposition 3.3]{Sch}). Thus, by
passing to the completion we may assume that $R$ is a complete
regular local ring containing a field of characteristic zero. Hence,
$R$ contains a field of characteristic zero that maps onto its
residue field. It implies that $R/\fm$ is infinite. Now, by our
assumptions, the celebrated result of Cowsik and Nori \cite{C-N},
implies that $I$ is a complete intersection ideal. Thus, $R/I^n$ is
Cohen-Macaulay for all $n>0$, then so is $R/I^{(n)}$
 for all $n>0$, by \cite[Corollary 3.8]{Li-Sw}. Hence, $H^i_{\fm}(R/I^n)=0$ for all
  $i < \depth R/I^n=\dim R/I$. On the other hand, one has
  ${\vpl}_nH^i_{\fm}(R/I^n)=0$ for all
  $i <\dim R/I$. Accordingly, by virtue of equality (2-1) we get
$$\fgrade(I,R)=\depth R/I^n=\depth R/I^{(n)}, \  \text{\ for\ all\ } n>0.$$
\end{proof}

In the Theorem \ref{Main} we can not remove the assumption
$\Ht(I)=l(I)$. Consider the following example.

\begin{example}  Let $R=\mathbb{C}[x,y,z,w]_{(x,y,z,w)}$ and $I=(x,y) \cap
(z,w)$ be an ideal of $R$. We have $\Ht(I)=2$, $l(I)=3$, where $I$
is a radical ideal. On the other hand,  $\fgrade(I,R)=2$ and $\dim
R/I=2$ but
  for all positive integers $n$, one has
$I^n=(x,y)^n \cap (z,w)^n$. Hence, $ \depth R/I^n=1$ for all $n>0$.
\end{example}

\begin{prop}  Let $(R,\fm)$ be a local ring of positive dimension
and $I$ a non-Cohen-Macaulay ideal of $R$. If $\depth R/I>0$ then
$\fgrade(I,R)>0$.
\end{prop}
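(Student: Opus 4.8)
My plan is to pass to the completion and recast the inequality as a vanishing statement. Since $\depth$ and formal grade are both stable under $\fm$-adic completion (as used already in the proof of Theorem \ref{Main}), I may assume $R$ is complete, and by the definition of formal grade the claim $\fgrade(I,R)>0$ is equivalent to
\[
\vpl_n H^0_{\fm}(R/I^n)=0 .
\]
Thus everything reduces to showing that this inverse limit of the finite length $\fm$-torsion modules $H^0_{\fm}(R/I^n)$ is trivial. The hypothesis $\depth R/I>0$ is exactly the statement $H^0_{\fm}(R/I)=0$, and I would first feed the short exact sequence $0\to I/I^n\to R/I^n\to R/I\to 0$ into the left exact functor $H^0_{\fm}(-)$ to obtain isomorphisms $H^0_{\fm}(R/I^n)\cong H^0_{\fm}(I/I^n)$ for all $n$; so it suffices to kill $\vpl_n H^0_{\fm}(I/I^n)$.

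Because each $H^0_{\fm}(R/I^n)$ has finite length, the system $\{H^0_{\fm}(R/I^n)\}_n$ is Mittag-Leffler, and its limit vanishes exactly when, for every $n$, the image of $H^0_{\fm}(R/I^m)$ in $H^0_{\fm}(R/I^n)$ is zero for $m\gg 0$, i.e. when $(I^m:_R\fm^\infty)\subseteq I^n$ for $m\gg 0$. In other words I must show that the saturation filtration $\{I^m:_R\fm^\infty\}_m$ is cofinal with the $I$-adic filtration. The hypothesis $\depth R/I>0$ provides only the base step $I:_R\fm^\infty=I$; the heart of the matter is to propagate this control to the saturations of all higher powers.

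A cleaner shortcut would invoke only fact (2-4): since $\min_n\depth R/I^n\le\fgrade(I,R)$, it would be enough to prove $\fm\notin\Ass(R/I^n)$, equivalently $\depth R/I^n>0$, for every $n$. This is precisely where I expect the difficulty to concentrate and where the non-Cohen-Macaulay hypothesis must be used. The delicate point is that $\fm$ can become an embedded prime of a high power $I^n$ even though it is absent from $I$ itself, so the passage from $R/I$ to all of its powers is the main obstacle. I would try to rule this out through the theory of asymptotic prime divisors: by McAdam's analytic-spread criterion, $\fm$ is an asymptotic associated prime of $I$ only if the analytic spread $l(I)$ equals $\dim R$, so the plan is to exploit the non-Cohen-Macaulayness of $R/I$ (which together with $\depth R/I>0$ forces $\dim R/I\ge 2$) to keep $l(I)$ strictly below $\dim R$, thereby excluding $\fm$ from $\Ass(R/I^n)$ for large $n$ and treating the finitely many small powers separately. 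Carrying out this control on the associated primes of the powers, or equivalently establishing the cofinality of the saturation filtration above, is the decisive and hardest step.
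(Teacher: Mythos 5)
Your preliminary reductions are sound (passing to the completion, translating $\depth R/I>0$ into $H^0_{\fm}(R/I)=0$, noting that $\fgrade(I,R)>0$ is the vanishing of ${\vpl}_nH^0_{\fm}(R/I^n)$, and the Mittag--Leffler reformulation), but the argument stops exactly where it has to start: you yourself label the cofinality of the saturation filtration as ``the decisive and hardest step'' and do not carry it out. Moreover, the two routes you sketch for that step are problematic. The shortcut via (2-4) would require $\depth R/I^n>0$ for \emph{every} $n$; this is strictly stronger than what is needed (the inverse limit can vanish even when $H^0_{\fm}(R/I^n)\neq 0$ for all large $n$, since the transition maps may eventually annihilate these finite-length submodules --- e.g.\ over a complete domain the limit vanishes as soon as $\dim R/I>0$, irrespective of whether $\fm\in\Ass(R/I^n)$), it is not implied by the hypotheses, and it is well known that $\fm$ can enter $\Ass(R/I^n)$ for $n\geq 2$ while being absent from $\Ass(R/I)$. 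The appeal to McAdam is a misapplication: his analytic-spread criterion characterizes membership in $\overline{A}^*(I)$, the asymptotic primes of the integral closures $\overline{I^n}$, not in $\Ass(R/I^n)$, and $A^*(I)$ can strictly contain $\overline{A}^*(I)$; in addition you give no argument that non-Cohen--Macaulayness of $R/I$ forces $l(I)<\dim R$.

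The ingredient you are missing is that the nonvanishing of the zeroth formal cohomology is controlled by the associated primes of $R$ itself, not by those of the powers of $I$: by \cite[Lemma 4.1]{Sch}, for $R$ complete one has ${\vpl}_nH^0_{\fm}(R/I^n)\neq 0$ if and only if there exists $\fp\in\Ass R$ with $\dim R/(I+\fp)=0$. This is precisely what the paper's proof uses: assuming $\fgrade(I,R)=0$, it produces such a prime $\fp=\Ann(xR)$ with $x\neq 0$, identifies $H^0_{\fm}(R/I)$ with $H^0_{\fp}(R/I)$ via the Independence Theorem (possible because $I+\fp$ is $\fm$-primary), and derives a contradiction with $H^0_{\fm}(R/I)=0$ from the element $x$. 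Without this lemma, or some other concrete handle on the transition maps of the system $\{H^0_{\fm}(R/I^n)\}_n$, your plan does not yield a proof.
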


\begin{proof} Since depth and formal grade are not effected under completion,
without loss of generality we may assume that $R$ is a
complete local ring.

In the contrary, suppose that $\fgrade(I,R)=0$. Thus,
${\vpl}_nH^0_{\fm}(R/I^n)\neq 0$. By virtue of \cite[Lemma
4.1]{Sch}, there exists $\fp \in \Ass R$ such that $I+\fp$ is an
$\fm$-primary ideal. So, by The Independence Theorem, one has
\begin{equation}\label{A}
   H^0_{\fp}(R/I) \cong H^0_{\fm}(R/I) \cong \bigcup_t(I:_R \fp^t),
\end{equation}
where the later module is zero by the assumption. Since $\fp \in
\Ass R$, there exists a non zero element $x \in R$ such that
$\fp=\Ann (xR)$. It yields that $0 \neq x \in \bigcup_t(I:_R
\fp^t)$, contradicts to the vanishing in (\ref{A}).
\end{proof}

\begin{remark}  The previous result is not true for Cohen-Macaulay
ideals:

Put $R=k[[x,y,z]]/(xy,xz)$ and $I=(x,y)R$. Note that $(y,z)R \in
\Ass R$, $(y,z)R+I=(x,y,z)R$. Hence, ${\vpl}_nH^0_{\fm}(R/I^n)\neq
0$ but $\depth R/I=1$.
\end{remark}

\begin{discussion} Let $(R,\fm)$ be a Cohen-Macaulay local ring
and $I$ an ideal of $R$. If $\depth R/I>t>1$ is it true that
$\fgrade(I,R)>t$?  To answer this question in
special cases if either $I$ is a cohomologically complete intersection ideal or $I$ is a
non-Cohen-Macaulay ideal with $\Ht(I) = \cd(R,I) -1$, by virtue of (2-3), one has
$$\fgrade(I,R) \geq \depth R/I.$$
It should be mentioned that for a regular local ring $(R,\fm)$
containing a field, it has been proved in \cite[Proposition 3.1]{V}
that if $\depth(R/I)> 1$, then $\fgrade(I,R)> 1$.
\end{discussion}

\section{Lyubeznik numbers}

Let $R$ be a local ring which admits a surjection from an
$n$-dimensional regular local ring $S$ containing a field. Let $I$
be the kernel of the surjection, and $k=S/\fm$. Lyubeznik
\cite{Lyu2} proved that the Bass numbers $\lambda_{i,j}(R) = \dim_k
\Ext^i_ S(k,H^{n-j}_I(S))$, known as Lyubeznik numbers of $R$,
depend only on $R, i$ and $j$, but neither on $S$ nor on the
surjection $S \rightarrow R$. Note that, in the light of \cite{Hun-Sh} and \cite{Lyu2} these Bass
numbers are all finite.

Put $d = \dim R$, Lyubeznik numbers satisfy the following properties:
\begin{enumerate}
  \item [(a)] $\lambda_{i,j}(R) = 0$ for $j > d$ or $i > j$.
  \item [(b)] $\lambda_{d,d}(R)\neq 0$.
  \item [(c)] If $R$ is Cohen-Macaulay, then $\lambda_{d,d}(R) = 1$ (cf. \cite{kaw}).
  \item [(d)] Euler characteristic (cf. \cite{Al}):
  $$\sum_{0 \leq i,j \leq d} (-1)^{i-j}\lambda_{i,j}(R)=1.$$
\end{enumerate}
Therefore, we can record all nonzero Lyubeznik numbers in the so-called Lyubeznik
table:

$$\Lambda(R)=\left(
         \begin{array}{cccccc}
           \lambda_{0,0} &   \lambda_{0,1}  & \ldots &      \lambda_{0,d-1} &   \lambda_{0,d}    \\
       0 &   \lambda_{1,1} & \ldots &     \lambda_{1,d-1} &   \lambda_{1,d}    \\
        0 &   0  & \ddots &  \vdots &   \vdots    \\
        0 & 0 & \ldots &      \lambda_{d-1,d-1} &   \lambda_{d-1,d}    \\
        0 &  0 & \ldots &    0 &   \lambda_{d,d}
      \end{array}
      \right),$$

where $\lambda_{i,j}:=\lambda_{i,j}(R) $ for every $0 \leq i,j \leq d$.

\begin{prop}  \label{Varbaro} (\cite[Corollary 3.3]{V}) Let $R$ be as above. If $\depth (R)=r$, then
 $$\lambda_{i-1,i}(R) = \lambda_{i,i}(R)=0$$
 for all $0 \leq i < r$.
\end{prop}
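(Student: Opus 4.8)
The plan is to convert the whole statement into the vanishing of local cohomology of the modules $H^{n-j}_I(S)$ and then play two spectral sequences against each other. Write $n=\dim S$, $d=\dim R$, $k=S/\fm$ and $E=E(S/\fm)$, and recall $\lambda_{i,j}(R)=\dim_k\Ext^i_S(k,H^{n-j}_I(S))$. First I would reduce to $S$ complete, which changes neither the $\lambda_{i,j}$ nor $\depth R$. The structural input is Lyubeznik's theorem that, since $S$ is regular and contains a field, every $H^{n-j}_I(S)$ is an $F$-module (positive characteristic) or a holonomic $D$-module (characteristic zero); consequently each $H^p_{\fm}(H^{n-j}_I(S))$ is an injective $S$-module, and counting socle dimensions gives $H^p_{\fm}(H^{n-j}_I(S))\cong E^{\lambda_{p,j}(R)}$. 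In particular $\lambda_{p,j}(R)=0$ if and only if $H^p_{\fm}(H^{n-j}_I(S))=0$, so the goal becomes: show $H^i_{\fm}(H^{n-i}_I(S))=0$ (this is $\lambda_{i,i}$) and $H^{i-1}_{\fm}(H^{n-i}_I(S))=0$ (this is $\lambda_{i-1,i}$) whenever $i<r$.

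Next I would record the depth hypothesis usefully. By local duality over the regular ring $S$ one has $H^m_{\fm}(R)\cong\Hom_S(\Ext^{n-m}_S(R,S),E)$, so $\depth R=r$ is equivalent to $H^m_{\fm}(R)=0$ for $m<r$, equivalently $\Ext^{m}_S(S/I,S)=0$ for $m>n-r$. The depth therefore lives on the module $S/I$ itself, and to inject it I would use that $S/I$ is $I$-torsion, so that $\Ext^\bullet_S(S/I,S)$ is the hyper-Ext of $S/I$ into the complex computing $H^\bullet_I(S)$; this yields a spectral sequence
\[
E_2^{p,q}=\Ext^p_S(S/I,H^q_I(S))\ \Longrightarrow\ \Ext^{p+q}_S(S/I,S),
\]
whose abutment is dead in total degrees $p+q>n-r$. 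In parallel, because $k$ is $I$-torsion and $\Ext^\bullet_S(k,S)=k$ is concentrated in degree $n$, the companion spectral sequence
\[
E_2^{p,q}=\Ext^p_S(k,H^q_I(S))\ \Longrightarrow\ \Ext^{p+q}_S(k,S)
\]
has $E_2$-terms $k^{\lambda_{p,n-q}(R)}$ and abutment $k$ in total degree $n$; this is the mechanism behind the triangular vanishing $\lambda_{p,j}=0$ for $p>j$ (property (a)) and the Euler relation (d).

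With these in hand I would run a corner argument near the diagonal. Fixing $i<r$ and setting $N=H^{n-i}_I(S)$, the targets $H^{i-1}_{\fm}(N)$ and $H^i_{\fm}(N)$ are, by the structure theorem, powers of $E$, so I can work entirely at the level of $E$-multiplicities. The point is that the differentials leaving a diagonal entry $\lambda_{j,j}$ land in the region $p>j$ where $\lambda=0$, so diagonal (and, one step up, superdiagonal) entries are pinned to $E_\infty$; transporting the depth bound of the first spectral sequence into these multiplicities should force $\lambda_{i,i}$, and then $\lambda_{i-1,i}$, to vanish for every $i<r$. The cleanest realization is probably to apply $\Gamma_{\fm}$ to the first spectral sequence so that its $E_2$-page also becomes a page of powers of $E$, making the two pages directly comparable and letting the annihilation of the abutment in degrees $m<r$ propagate to the extremal entries.

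The main obstacle is precisely this transport step. The depth of $R$ is visible only through $\Ext^\bullet_S(S/I,-)$ and $H^\bullet_{\fm}(S/I)$, whereas the Lyubeznik numbers are read off $\Ext^\bullet_S(k,-)$; bridging them means controlling how the non-$\fm$-torsion module $S/I$ interacts with the $D$- or $F$-modules $H^q_I(S)$, and then doing the bookkeeping of $E_\infty$-multiplicities carefully enough to be sure that no surviving differential resurrects $\lambda_{i,i}$ or $\lambda_{i-1,i}$ in the bottom-left corner. Establishing that bookkeeping — essentially showing that in this corner the extremal entries are detected faithfully by the dead low-degree abutment — is where the real work lies; everything before it is standard local-duality and structure-theory input.
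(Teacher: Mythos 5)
Your reduction of the statement to the vanishing of $H^i_{\fm}(H^{n-i}_I(S))$ and $H^{i-1}_{\fm}(H^{n-i}_I(S))$ via Lyubeznik's structure theory is correct, and so is the translation of the depth hypothesis into $\Ext^m_S(S/I,S)=0$ for $m>n-r$. But the decisive step --- transporting that vanishing into the Bass numbers $\Ext^p_S(k,H^q_I(S))$ --- is exactly the step you leave open, and the mechanisms you gesture at do not work as described. First, vanishing of the abutment of the spectral sequence $\Ext^p_S(S/I,H^q_I(S))\Rightarrow\Ext^{p+q}_S(S/I,S)$ in total degrees $>n-r$ only forces the corresponding $E_\infty$-terms to vanish, not the $E_2$-terms; differentials can (and in general do) kill nonzero $E_2$-entries, so the "dead abutment" cannot simply be "propagated to the extremal entries." (The same confusion appears earlier when you assert that the companion spectral sequence "is the mechanism behind" the vanishing $\lambda_{p,j}=0$ for $p>j$; it is consistent with that vanishing but does not prove it --- Lyubeznik's proof of property (a) uses the bound $\operatorname{injdim}H^q_I(S)\le\dim\Supp H^q_I(S)$.) Second, "applying $\Gamma_{\fm}$ to the first spectral sequence" is not a legitimate operation: $\Gamma_{\fm}$ is only left exact, so it does not carry a spectral sequence to a spectral sequence, and the terms $\Ext^p_S(S/I,H^q_I(S))$ are in general neither injective nor $\fm$-torsion, so they are not "powers of $E$" and there is no level at which the two pages become directly comparable.

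There is also a structural reason to distrust the plan as stated: nothing in it distinguishes the positions $(i,i)$ and $(i-1,i)$ from $(i-2,i)$. Whether the third diagonal vanishes below $\depth R$ is precisely the open Question \ref{Question}, so any argument that would apply uniformly to all entries of the columns $j<r$ (as a generic "corner/bookkeeping" argument would) must be wrong somewhere; a correct proof has to exploit something position-specific about the top two diagonals. For comparison, the paper itself offers no proof of this proposition --- it is imported verbatim from \cite[Corollary 3.3]{V} --- and the genuinely proved statements here (Lemma \ref{Lyunumb} and its consequences) obtain vanishing of whole columns only under the \emph{stronger} hypothesis $\fgrade(I,S)\ge\depth S/I$, via the Matlis duality (\ref{duality}), which kills the modules $H^{n-i}_I(S)$ outright. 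Your setup is reasonable raw material, but as it stands the proposal is a plan with an acknowledged hole at the only nontrivial point, not a proof.
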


In this section, in order to give a positive answer to the Question
\ref{Question} we try to give a slight extension of the Proposition
\ref{Varbaro}.

\begin{lem}  \label{Lyunumb} Let $(R,\fm)$ be a local ring
admits an epimorphism  from a regular local ring $(S,\fn)$ which is
containing a field and let $I$ be its kernel. Assume that
$\fgrade(I,S) \geq \depth S/I
= r,\ r \in \mathbb{N},$ then
 $$\lambda_{i-t,i}(R) = 0 \text{\ \ for\ all\ \
} 0 \leq i < r \text{\ \ and\ \ } 0 \leq t \leq i.$$
\end{lem}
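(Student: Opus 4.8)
The plan is to reduce the entire statement to a single vanishing fact about local cohomology of $S$, exploiting the defining formula for Lyubeznik numbers. Recall that with $n = \dim S$ and $k = S/\fn$ one has $\lambda_{i-t,i}(R) = \dim_k \Ext^{i-t}_S(k, H^{n-i}_I(S))$. Since every $\Ext$ of the zero module vanishes, it suffices to prove that the module $H^{n-i}_I(S)$ is itself zero for each $i$ in the range $0 \leq i < r$; this one vanishing simultaneously annihilates all the numbers $\lambda_{i-t,i}(R)$ with $0 \leq t \leq i$, that is, the whole $i$th column of the Lyubeznik table from the top row down to the diagonal.

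To get at the vanishing of $H^{n-i}_I(S)$, I would convert the formal-grade hypothesis into a bound on cohomological dimension. Because $S$ is regular it is in particular Cohen-Macaulay, so property (2-3) is available and yields $\fgrade(I,S) + \cd(S,I) = \dim S = n$, hence $\cd(S,I) = n - \fgrade(I,S)$. Feeding in the hypothesis $\fgrade(I,S) \geq r$ gives $\cd(S,I) \leq n - r$.

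Now fix $i$ with $0 \leq i < r$. Then $n - i > n - r \geq \cd(S,I)$, and since $\cd(S,I) = \sup \{ j \mid H^j_I(S) \neq 0 \}$ is by definition the top nonvanishing cohomological degree, every local cohomology module in a degree strictly above it vanishes; in particular $H^{n-i}_I(S) = 0$. Substituting this into the $\Ext$ description gives $\lambda_{i-t,i}(R) = 0$ for all $0 \leq t \leq i$, which is exactly the assertion.

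The argument is essentially forced once the two ingredients are in place, namely the $\Ext$ presentation of the Lyubeznik numbers and the identity $\fgrade + \cd = \dim$ valid for Cohen-Macaulay $S$, so I do not anticipate a serious obstacle; the computation is a one-line inequality chain rather than a delicate estimate. The only point meriting care is checking that (2-3) may legitimately be invoked, which rests solely on $S$ being Cohen-Macaulay and is automatic from regularity. It is worth noting that the hypothesis enters only through the inequality $\fgrade(I,S) \geq r$, the equality $\depth S/I = r$ serving merely to pin down the value of $r$.
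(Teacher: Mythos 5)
Your argument is correct and is essentially the paper's own proof in expanded form: the paper derives the vanishing of $H^{n-i}_I(S)$ for $i<r$ from the duality (\ref{duality}), and your invocation of (2-3) (which for a regular, hence Gorenstein, $S$ is just that duality repackaged as $\fgrade(I,S)+\cd(S,I)=\dim S$) reaches the same vanishing, after which the $\Ext$ description of the Lyubeznik numbers finishes the proof identically. No gap; your write-up merely makes explicit the steps the paper leaves to the reader.
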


\begin{proof} The proof follows from the equality (\ref{duality}) mentioned
in the Introduction.
\end{proof}

As a byproduct of the Lemma \ref{Lyunumb}, we may prove the Theorem \ref{flatendomorphism}.
 Before it we need a lemma pointed out to the authors by Matteo Varbaro.
 It provides a slight generalization of our earlier result. Let us recall
 that for two rings $R$ and $S$, a ring map $\varphi:R \rightarrow S$ has
  the going-down property if, for any two primes $\fp' \subsetneq \fp$ of
   $R$ and any prime $\mathcal{P}$ in $S$ with $\varphi^{-1}(\mathcal{P})=\fp$,
    there is a prime $\mathcal{P'} \subsetneq \mathcal{P}$ of $S$ such that
    $\varphi^{-1}(\mathcal{P'})=\fp'$.

\begin{remark} Notice that every flat homomorphism implies the going
 down property. Take into account that the Frobenius endomorphism
  of a ring of positive characteristic and,
for any field $k$ and any integer $t \geq 2$, the $k$-linear
 endomorphism $\varphi(x_i) = x^t_i$
of $k[x_1,\ldots, x_n]$ are the prototypical examples of flat
endomorphisms.
\end{remark}

\begin{lem} \label{Matteo}
Let $R$ be a Noetherian local ring and $I \subset R$ an ideal of finite projective
 dimension. Assume that $\varphi:R \rightarrow R$ is a ring  endomomorphism
 satisfying the going down property and $J=\varphi(I)R$. Then $\depth R/I \leq \depth R/J$.
\end{lem}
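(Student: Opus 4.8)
The plan is to reduce the asserted depth inequality to an inequality of projective dimensions and then to verify the latter by base change along $\varphi$. Since $\pd_R R/I<\infty$, the Auslander--Buchsbaum formula gives $\depth R/I=\depth R-\pd_R R/I$. Thus, if I can show that $\pd_R R/J<\infty$ together with $\pd_R R/J\le \pd_R R/I$, then applying the same formula to $R/J$ yields
$$\depth R/J=\depth R-\pd_R R/J\ \ge\ \depth R-\pd_R R/I=\depth R/I,$$
which is exactly the claim. So the whole problem is reduced to proving $\pd_R R/J\le \pd_R R/I=:p$.

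To this end I would fix a minimal free resolution $F_\bullet\colon 0\to F_p\to\cdots\to F_1\to F_0\to 0$ of $R/I$ and apply $\varphi$ entrywise to the matrices of its differentials $\partial_i$, producing a complex $G_\bullet$ of free modules of the same ranks. By right exactness of base change one has $H_0(G_\bullet)=R/\varphi(I)R=R/J$, so it suffices to prove that $G_\bullet$ is acyclic: then $G_\bullet$ is a free resolution of $R/J$ of length $p$, giving both $\pd_R R/J<\infty$ and $\pd_R R/J\le p$. Acyclicity I would establish with the Buchsbaum--Eisenbud criterion, which requires the numerical rank conditions together with $\grade\, I_{r_i}(\partial_i^{G})\ge i$ for each $i\ge 1$, where $r_i=\rank \partial_i$ and $I_{r_i}$ denotes the ideal of $r_i\times r_i$ minors. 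The essential point is that minors commute with the ring map, so $I_{r_i}(\partial_i^{G})=\varphi\!\left(I_{r_i}(\partial_i^{F})\right)R$.

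Here is where the going-down hypothesis enters. For any ideal $\fa\subset R$ and any prime $\varphi(\fa)R\subseteq\mathcal P$, setting $\fp=\varphi^{-1}(\mathcal P)\supseteq\fa$ and lifting a maximal chain below $\fp$ by repeated use of going down produces a chain of the same length below $\mathcal P$; hence $\Ht\mathcal P\ge\Ht\fp\ge\Ht\fa$, and minimizing over minimal primes gives $\Ht(\varphi(\fa)R)\ge\Ht\fa$. Applying this to $\fa=I_{r_i}(\partial_i^{F})$, and using that the exactness of $F_\bullet$ forces $\grade\,I_{r_i}(\partial_i^{F})\ge i$ (hence $\Ht\,I_{r_i}(\partial_i^{F})\ge i$), I obtain $\Ht\,I_{r_i}(\partial_i^{G})\ge i$ for every $i$; the rank conditions for $G_\bullet$ follow from the positivity of these heights in the same way.

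I expect the \emph{main obstacle} to be precisely the passage from the height bound that going down supplies to the \emph{grade} bound that the acyclicity criterion demands, since in general $\grade\le\Ht$ with equality only under Cohen--Macaulay-type hypotheses. When $R$ is Cohen--Macaulay the two coincide and the argument closes immediately; for arbitrary Noetherian local $R$ I would first reduce to the complete case (legitimate, as depth and the relevant invariants are stable under completion, cf.\ the reductions already used in Theorem \ref{Main}) and then control grade directly on the ideals of minors, using a dualizing complex to convert the grade conditions into vanishing of the appropriate $\Ext$ modules. Handling this grade-versus-height discrepancy in the non-Cohen--Macaulay setting is the delicate step; once it is settled, the Auslander--Buchsbaum reduction of the first paragraph finishes the proof.
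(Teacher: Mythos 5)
Your proposal is essentially the paper's own argument: the same reduction via Auslander--Buchsbaum to the inequality $\pd R/J\le\pd R/I$, the same construction of the complex obtained by applying $\varphi$ entrywise to a finite free resolution of $R/I$, and the same appeal to the Buchsbaum--Eisenbud acyclicity criterion with the going-down property controlling the determinantal ideals $I_{r_i}(\varphi(A_i))=\varphi\left(I_{r_i}(A_i)\right)R$. The one point at which you stop --- the passage from the height bound $\Ht\left(\varphi(I_{r_i}(A_i))R\right)\ge i$, which is all that going-down actually delivers, to the grade bound $\grade\left(\varphi(I_{r_i}(A_i))R\right)\ge i$ that the acyclicity criterion demands --- is exactly the point where the paper's proof is also silent: it simply asserts that the extended ideals of minors ``have enough grade too.'' So you have reconstructed the published argument and correctly located its weakest joint, rather than missed an idea the authors supply. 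Two caveats. First, your proposed repair is not yet a proof: completion does not remove the grade-versus-height discrepancy, and converting grade into vanishing of $\Ext$ against $R$ or a dualizing complex leaves you needing to transport that vanishing along $\varphi$, which is the same difficulty restated. Second, in the only place the lemma is invoked (Theorem \ref{flatendomorphism}) the ring $R$ is regular, hence Cohen--Macaulay, so $\grade=\Ht$ on all ideals and both your argument and the paper's close completely there; for a general Noetherian local $R$ this step needs a genuine additional input (for instance flatness of $\varphi$, or a Cohen--Macaulay hypothesis on $R$), and as written neither your proposal nor the paper provides it.
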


\begin{proof} Suppose that $\pd R/I=n$.
 Let $F_n \rightarrow F_{n-1} \rightarrow \ldots \rightarrow F_0$ be a free resolution
  of $R/I$, with maps given by matrices $A_i=(a_{pq}):F_i \rightarrow F_{i-1}$,
   where $p, q$ are integers.
   Applying  the map $\varphi$ to this free resolution, we get a complex of free
    modules with maps given by $\varphi(A_i)=(\varphi(a_{pq}))$. By the acyclicity
     criterion of Buchsbaum-Eisenbud (cf. \cite[Theorem 1.4.13]{Br-Her}),
      the ideal of minors of the matrices $A_i$ namely $I_{r_i}(A_i)$ have
        grade $\geq i$ for $i=1, \ldots , n$, where
         $r_i=\sum^n_{j=i} (-1)^{j-i} \rank F_j$. But, since $\varphi$ satisfies
         the going down property, the ideals of minors of the $\varphi(A_i)$, which
          are just the ideal of minors of $A_i$ extended by $\varphi$, have enough
           grade too. Once again using the Buchsbaum-Eisenbud criterion one
            has $\pd R/J \leq \pd R/I$. Now, the assertion follows by the
             Auslander-Buchsbaum Theorem.
\end{proof}

\begin{thrm} \label{flatendomorphism}  Let $(R,\fm)$ be a regular local ring
containing a field and let $I$ be an ideal of $R$. Assume that
$\varphi:R \rightarrow R$ is a ring endomomorphism satisfying going
down property. Further assume that $\{\varphi^t(I)R\}_{t\geq 0}$ is
a decreasing chain of ideals cofinal with $\{I^t\}_{t\geq 0}$. If
$\depth R/I = r$ then
 $$\lambda_{i-t,i}(R/I) = 0 \text{\ \ for\ all\ \
} 0 \leq i < r \text{\ \ and\ \ } 0 \leq t \leq i.$$
\end{thrm}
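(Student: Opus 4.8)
The plan is to reduce the whole statement to the hypothesis of Lemma \ref{Lyunumb} by establishing the single inequality $\fgrade(I,R) \geq \depth R/I$. Once this is in hand, I apply Lemma \ref{Lyunumb} with $S=R$ (which is regular and contains a field) and with $I$ as the kernel of the natural surjection $R \to R/I$: the condition $\fgrade(I,R) \geq \depth R/I = r$ is exactly what that lemma requires, and its conclusion is precisely $\lambda_{i-t,i}(R/I)=0$ for all $0 \leq i < r$ and $0 \leq t \leq i$. So the entire task is to prove $\fgrade(I,R) \geq r$.

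First I would feed the endomorphism $\varphi$ into Lemma \ref{Matteo}. Since $R$ is regular, every ideal has finite projective dimension; in particular each $\varphi^t(I)R$ does, so Lemma \ref{Matteo} is available at every stage. Using the identity $\varphi(\varphi^t(I)R)R = \varphi^{t+1}(I)R$ (which follows at once from $\varphi(r\,\varphi^t(a)) = \varphi(r)\,\varphi^{t+1}(a)$ and $\varphi(1)=1$), an iteration of Lemma \ref{Matteo} yields $\depth R/\varphi^t(I)R \leq \depth R/\varphi^{t+1}(I)R$ for every $t \geq 0$, and hence $\depth R/I \leq \depth R/\varphi^t(I)R$ for all $t$. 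Taking the infimum over $t$ gives $\depth R/I \leq \min_t \depth R/\varphi^t(I)R$.

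Next I would connect the right-hand side to the formal grade through the cofinality hypothesis. Because $\{\varphi^t(I)R\}_{t\geq 0}$ is cofinal with $\{I^t\}_{t\geq 0}$, the two filtrations define the same topology, so the inverse systems $\{H^i_{\fm}(R/\varphi^t(I)R)\}$ and $\{H^i_{\fm}(R/I^n)\}$ have the same projective limit; in particular the formal local cohomology, and therefore $\fgrade(I,R)$, may be computed from the $\varphi$-chain. With this identification I then run the vanishing argument underlying property (2-4): if $i < \min_t \depth R/\varphi^t(I)R$, then $H^i_{\fm}(R/\varphi^t(I)R)=0$ for every $t$, whence ${\vpl}_t H^i_{\fm}(R/\varphi^t(I)R) \cong {\vpl}_n H^i_{\fm}(R/I^n) = 0$, forcing $\fgrade(I,R) > i$. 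This gives $\min_t \depth R/\varphi^t(I)R \leq \fgrade(I,R)$, and combining with the previous paragraph produces the chain $r = \depth R/I \leq \min_t \depth R/\varphi^t(I)R \leq \fgrade(I,R)$, which is exactly what is needed.

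The main obstacle is the cofinality step: one must justify carefully that passing from the adic chain $\{I^t\}$ to the a priori unrelated chain $\{\varphi^t(I)R\}$ leaves the formal local cohomology modules, and hence the formal grade, unchanged. This rests on the insensitivity of projective limits to cofinal subsystems, but the transition maps of the two systems must be genuinely matched up, and this is the point where cofinality of the ideal filtrations does real work rather than the remaining, routine, points (finiteness of projective dimension from regularity and the identity $\varphi(\varphi^t(I)R)R = \varphi^{t+1}(I)R$).
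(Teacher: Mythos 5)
Your proposal is correct and follows essentially the same route as the paper: iterate Lemma \ref{Matteo} (using that $\varphi^t$ inherits the going-down property, or equivalently that each $\varphi^t(I)R$ has finite projective dimension since $R$ is regular) to get $\depth R/I \leq \depth R/\varphi^t(I)R$ for all $t$, deduce the vanishing of $H^i_{\fm}(R/\varphi^t(I)R)$ for $i<r$, identify ${\vpl}_t H^i_{\fm}(R/\varphi^t(I)R)$ with ${\vpl}_n H^i_{\fm}(R/I^n)$ via cofinality (the paper cites Schenzel, Lemma 3.8, for exactly this point), and conclude with Lemma \ref{Lyunumb}.
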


\begin{proof} It is clear that composition of maps satisfying the going down
 property, satisfies  the going down property itself.
 In particular, if $\varphi$ has the going-down property, the so does
 $\varphi^t$ for any positive integer $t$. By virtue of Lemma \ref{Matteo}, one has
$$\depth R/I \leq \depth R/\varphi(I)R \leq \depth R/\varphi^2(I)R \leq \depth R/\varphi^3(I)R \leq \ldots.$$
Hence, $H^i_{\fm}(R/\varphi^t(I)R)=0$ for all $i < r$ and all integer $t$. By applying inverse limit one has $0={\vpl}_tH^i_{\fm}(R/\varphi^t(I)R)\cong {\vpl}_tH^i_{\fm}(R/I^t)$ for all $i < r$, i.e $\fgrade(I,R)\geq \depth R/I$. To this end, note
that the isomorphism follows from \cite[Lemma 3.8]{Sch}. Now we are done by Lemma \ref{Lyunumb}.
\end{proof}

With the assumptions of Theorem \ref{flatendomorphism} we see that all the entries from column
$1$ up to column $r$ in the Lyubeznik table of $R/I$ are zero.

We conclude this section with a result on the vanishing of Lyubeznik
numbers using the concept of minimal reduction. A reduction $J$ of
$I$ is minimal if it does not contain properly another reduction of
$I$.

\begin{prop} \label{last} Let $(R,\fm)$ be a complete regular local ring containing a field and
let $I$ be a Cohen-Macaulay ideal of $R$ which is generically complete intersection.
Further assume that $\Ht(I) = l(I) - 1$ and $J$ is a proper minimal reduction of $I$.
Then
 $$\lambda_{i-t,i}(R/I) = 0 \text{\ \ for\ all\ } \ 0 \leq i < r \text{\ \ and\ \ } 0 \leq t \leq i,$$
 where $\depth R/J=r$.
\end{prop}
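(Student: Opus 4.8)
The plan is to reduce the whole assertion to the single inequality $\fgrade(I,R)\ge r=\depth R/J$ and then to reuse the duality argument behind Lemma~\ref{Lyunumb}. Write $d=\dim R$ and regard $R/I$ as the quotient of the regular ring $R$ by its subideal $I$, so that $\lambda_{i-t,i}(R/I)=\dim_k\Ext^{\,i-t}_R(R/\fm,H^{d-i}_I(R))$. If $i<\fgrade(I,R)$, then the duality (\ref{duality}) (applicable since $R$ is regular, hence Gorenstein) gives $\Hom_R(H^{d-i}_I(R),E(R/\fm))\cong\vpl_nH^{i}_\fm(R/I^n)=0$, whence $H^{d-i}_I(R)=0$ and therefore $\lambda_{i-t,i}(R/I)=0$. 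Thus, once $r\le\fgrade(I,R)$ is established, every $i$ with $0\le i<r$ satisfies $i<\fgrade(I,R)$, and the claimed vanishing follows for all $0\le t\le i$.

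To estimate $\fgrade(I,R)$ I would exploit that $J$ is a minimal reduction. Since a proper minimal reduction exists we may assume $R/\fm$ is infinite (otherwise pass to the faithfully flat extension $R[t]_{\fm R[t]}$, under which $\depth$, $\fgrade$, $\cd$ and the Lyubeznik numbers are stable), so $\mu(J)=l(I)$. As $J$ is a reduction we have $\rad J=\rad I$, hence $\cd(R,J)=\cd(R,I)$. Therefore $\cd(R,I)=\cd(R,J)\le\mu(J)=l(I)=\Ht I+1$, and the equality (2-3) yields $\fgrade(I,R)=d-\cd(R,I)\ge d-\Ht I-1=\dim R/I-1$. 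Moreover the chains $\{I^n\}$ and $\{J^n\}$ are cofinal: if $JI^{c}=I^{c+1}$ then $I^{c+k}=J^{k}I^{c}\subseteq J^{k}\subseteq I^{k}$; so by \cite[Lemma~3.8]{Sch} one gets $\vpl_nH^i_\fm(R/J^n)\cong\vpl_nH^i_\fm(R/I^n)$, i.e. $\fgrade(J,R)=\fgrade(I,R)$.

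It remains to prove $\depth R/J\le\fgrade(I,R)$, and here a dichotomy appears. If $R/J$ is \emph{not} Cohen--Macaulay, then $\depth R/J\le\dim R/J-1=\dim R/I-1\le\fgrade(I,R)$ by the previous paragraph, and we are done. The case I expect to be the main obstacle is when $R/J$ is Cohen--Macaulay, so that $\depth R/J=\dim R/J=\dim R/I$: then one must sharpen the estimate to $\fgrade(I,R)=\dim R/I$, equivalently $\cd(R,I)=\Ht I$, i.e. $I$ (equivalently $J$) is a cohomological complete intersection.

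To treat that case I would use the remaining hypotheses in an essential way. Since $I$ is Cohen--Macaulay, generically a complete intersection, and of analytic deviation $l(I)-\Ht I=1$, the minimal reduction $J$ is an almost complete intersection generated by a $d$-sequence $a_1,\dots,a_{\Ht I+1}$; the plan is to deduce from this that $\operatorname{gr}_J(R)$ is Cohen--Macaulay and, together with $R/J$ Cohen--Macaulay, that $R/J^n$ is Cohen--Macaulay for every $n$. Then $H^i_\fm(R/J^n)=0$ for all $i<\dim R/J^n=\dim R/I$ and all $n$, so passing to the inverse limit and using the cofinality of the second paragraph gives $\vpl_nH^i_\fm(R/I^n)=0$ for $i<\dim R/I$, that is $\fgrade(I,R)=\dim R/I$, as needed. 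An alternative, well adapted to the standing hypothesis that $R$ is complete and contains a field, is to reduce the implication $R/J$ Cohen--Macaulay $\Rightarrow\cd(R,J)=\Ht J$ to positive characteristic and invoke the theorem of Peskine--Szpiro that a Cohen--Macaulay quotient of a regular ring of characteristic $p$ is a cohomological complete intersection. The difficulty is genuinely concentrated here: in characteristic zero Cohen--Macaulayness alone does not force a cohomological complete intersection (witness the Segre cone), so the generically-complete-intersection hypothesis and the properness of the reduction (which rules out $l(I)=\mu(I)$) cannot be dispensed with.
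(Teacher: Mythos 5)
Your reduction to the single inequality $\fgrade(I,R)\geq\depth R/J$ via the duality (\ref{duality}) is exactly the paper's first step (Lemma \ref{Lyunumb}), and your bound $\cd(R,I)=\cd(R,J)\leq\mu(J)=l(I)=\Ht(I)+1$, giving $\fgrade(I,R)\geq\dim R-l(I)=\dim R/I-1$, is also the paper's central estimate. But your argument then splits into a dichotomy and leaves the branch where $R/J$ is Cohen--Macaulay unresolved: there you only outline two possible strategies (Cohen--Macaulayness of $R/J^n$ for all $n$ via $d$-sequences, or reduction to characteristic $p$), neither of which is carried out, and you yourself concede that in characteristic zero Cohen--Macaulayness does not force $\cd(R,I)=\Ht(I)$. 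As written, this is a genuine gap.

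The missing ingredient is that this hard branch is vacuous. Under the standing hypotheses ($I$ Cohen--Macaulay, generically a complete intersection, $l(I)=\Ht(I)+1$, and $J$ a \emph{proper} minimal reduction), \cite[Theorem 5.3.4]{Vas} gives the exact value
$$\depth R/J=\dim R-\Ht(J)-1=\dim R/J-1,$$
so $R/J$ is never Cohen--Macaulay; morally, if $R/J$ were Cohen--Macaulay then the almost complete intersection $J$ would be unmixed and, since $J_\fp=I_\fp$ at the minimal primes (this is where ``generically complete intersection'' enters), one would get $J=I$, contradicting properness of the reduction. With this equality in hand your first branch is the whole proof: $\depth R/J=\dim R-l(I)\leq\dim R-\cd(R,I)=\fgrade(I,R)$ by (2-3). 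Note that without invoking the exact depth formula (or some substitute) the hypotheses ``Cohen--Macaulay'', ``generically complete intersection'' and ``proper'' are never used in your first branch, which should have been a warning sign that the dichotomy was not the intended route. Your side remarks (cofinality of $\{I^n\}$ and $\{J^n\}$, passage to an infinite residue field, the observation that in positive characteristic the result follows from the known equality $\fgrade(I,R)=\depth R/I$ for Cohen--Macaulay ideals) are all correct and consistent with the paper.
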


\begin{proof}
Once again using Lemma \ref{Lyunumb}, it is enough to prove that $\fgrade(I,R) \geq
\depth R/J$. It is known that $\rad(I) = \rad(J)$, thus we have $\fgrade(I,R) = \fgrade(J,R)$.

Suppose $k = R/\fm$. If $char(k) > 0$ the result is known (cf.
\cite[Remark 3.1]{E}), so we can assume $char(k) = 0$ ($k$ is
infinite). By the assumptions we have
$$\depth R/J = \dim R- \Ht(J)- 1 = \dim R- l(I) \leq \dim R- \cd(R,I) =\fgrade(I,R),$$
where the first equality follows from \cite[Theorem 5.3.4]{Vas} and
the last one is clear from (2-3). To this end note that $\cd(R,I)
=\cd(R,J) \leq \mu(J)=l(I)$.
\end{proof}

\proof[Acknowledgment]

The authors are grateful to Josep  \`{A}lvarez Montaner and Matteo Varbaro for careful reading of the earlier draft of this paper and their useful comments make this paper readable.

\end{document}